\documentclass[11pt]{article}

\usepackage[a4paper,margin=2.8cm]{geometry}
\usepackage{amsmath,amssymb,amsthm,mathtools}
\usepackage[T1]{fontenc}
\usepackage{lmodern}
\usepackage[hidelinks]{hyperref}

\newtheorem{theorem}{Theorem}[section]
\newtheorem{proposition}[theorem]{Proposition}
\newtheorem{lemma}[theorem]{Lemma}
\newtheorem{corollary}[theorem]{Corollary}

\theoremstyle{remark}
\newtheorem{remark}[theorem]{Remark}

\newcommand{\C}{\mathbb C}

\title{A Note on Additive Diameter}
\author{Ernesto Ingrosso\\[0.4em]
\small Dipartimento di Matematica e Applicazioni ``Renato Caccioppoli''\\
\small Universit\`a degli Studi di Napoli Federico II, Complesso Universitario Monte S. Angelo\\
\small Via Cintia, Napoli, Italy\\
\small \texttt{ernesto.ingrosso2@unina.it}}
\date{}

\begin{document}
\maketitle

\begin{abstract}
We record two results on additive diameters of group representations. First we give a corrected version of Proposition~5.3 from the first arXiv version of \cite{JezernikSpenko}. Let $0<\varepsilon<1/3$, let $n>9/\varepsilon^2$, and let $U\leq \mathfrak{sl}_n(\C)$ with $\dim U>\varepsilon n^2$. Then

$$
\operatorname{diam}^{\mathrm{SL}_n(\C)}_{+}
\bigl(\mathfrak{sl}_n(\C),U\bigr)
\leq
\frac{32}{\varepsilon}+8.
$$

The proof uses an averaging argument for the action of the symmetric group on the off-diagonal matrix positions. This argument was subsequently developed further in \cite{JezernikSpenko2026}.

We also prove the right-hand inequality in Question~6.7 of \cite{JezernikSpenko}. If $G$ is a complex algebraic group, $V$ is a finite-dimensional $G$-module, $U\leq V$, and $\mathfrak g=\operatorname{Lie}(G)$, then

$$
\operatorname{diam}^{G}_{+}(V,U)
\leq
\operatorname{diam}^{\mathfrak g}_{+}(V,U).
$$

No irreducibility or connectedness assumption is needed.
\end{abstract}

\section{Introduction}

Let $G$ be a group and let $V$ be a finite-dimensional $G$-module. If $U\leq V$, define
$$
\operatorname{diam}^{G}_{+}(V,U)
=
\min\left\{
d:
V=U^{g_1}+\cdots+U^{g_d}
\text{ for some }g_1,\ldots,g_d\in G
\right\},
$$

with value $\infty$ if no such integer exists. Here $U^g$ denotes the image of $U$ under $g$.

If the diameter is finite, then $
\operatorname{diam}^{G}_{+}(V,U)
\geq
\left\lceil\frac{\dim V}{\dim U}\right\rceil.
$ Following \cite{JezernikSpenko}, the diameter is called optimal when equality holds.

Additive diameters of group representations were studied systematically by Jezernik and \v{S}penko in \cite{JezernikSpenko}. Two parts of their work are relevant here.

The first concerns the conjugation action of $\mathrm{SL}_n(\C)$ on $\mathfrak{sl}_n(\C)$. The Borel fixed point theorem permits the reduction of several questions to Borel-stable subspaces. For this action such subspaces are upper right block closed, and the problem becomes one of arranging matrix positions.

In the first arXiv version of \cite{JezernikSpenko}, Proposition~5.3 was intended to prove that a subspace of dimension greater than $\varepsilon n^2$ has additive diameter bounded in terms of $\varepsilon$ alone. The proof of that proposition contained an error.

We give a corrected argument. Suppose that $
0<\varepsilon<\frac13,$ 
$n>\frac9{\varepsilon^2},$ and $\dim U>\varepsilon n^2.$
Then $
\operatorname{diam}^{\mathrm{SL}_n(\C)}_{+}
\bigl(\mathfrak{sl}_n(\C),U\bigr)
\leq
8\left\lceil\frac4\varepsilon\right\rceil
\leq
\frac{32}{\varepsilon}+8.
$

The proof is elementary. If $U$ is upper right block closed, let $S$ be the set of off-diagonal pairs $(i,j)$ such that $E_{ij}\in U$. The hypothesis on $\dim U$ gives a lower bound for $|S|$. Averaging over the action of the symmetric group on the set of ordered pairs $(i,j)$, $i\neq j$, shows that one may choose successive permutations so that each new copy of $S$ covers a fixed proportion of the positions not previously covered. After at most $\lceil 4/\varepsilon\rceil$ steps the sum of the corresponding conjugates has dimension greater than $3n^2/4+n/2$. The diameter bound of \cite{JezernikSpenko} may then be applied, and the Borel reduction removes the stability assumption.

The averaging argument used here was subsequently developed in greater generality in \cite{JezernikSpenko2026}. In that paper the same principle is extended to irreducible representations of compact groups and leads to stronger bounds for a number of representations, including the conjugation representation considered above. We retain the present proof since it gives a direct correction of Proposition~5.3 in the first version of \cite{JezernikSpenko} and records the original symmetric-group argument.

The second question concerns the relation between a group representation and the corresponding Lie algebra representation. Let $G$ be a complex algebraic group, let $V$ be a finite-dimensional $G$-module, and put $\mathfrak g=\operatorname{Lie}(G)$. Differentiation gives a representation $
d\rho.$

For $U\leq V$, the elementary Lie-additive diameter is
$$
\operatorname{diam}^{\mathfrak g}_{+}(V,U)
=
\min\left\{
d+1:
V=
U+d\rho(x_1)U+\cdots+d\rho(x_d)U,
\quad
x_1,\ldots,x_d\in\mathfrak g
\right\}.
$$

Question~6.7 of \cite{JezernikSpenko} asks, under irreducibility and connectedness assumptions, whether
$$
\operatorname{diam}^{\mathfrak g,\mathrm{mon}}_{+}(V,U)
\leq
\operatorname{diam}^{G}_{+}(V,U)
\leq
\operatorname{diam}^{\mathfrak g}_{+}(V,U).
$$

We prove the second inequality without either assumption. Thus, for every finite-dimensional representation of a complex algebraic group and every subspace $U\leq V$,
$$
\operatorname{diam}^{G}_{+}(V,U)
\leq
\operatorname{diam}^{\mathfrak g}_{+}(V,U).
$$

Section~2 contains the definitions and the Borel reduction used below. Section~3 gives the correction to Proposition~5.3 from the first version of \cite{JezernikSpenko}. Section~4 proves the right-hand inequality in Question~6.7.

\section{Preliminaries}

Throughout the paper all vector spaces are defined over $\C$.  If
$W_1,\ldots,W_r$ are subspaces of a vector space $V$, then $ W_1+\cdots+W_r$
denotes their linear span.  We write $E_{ij}$ for the standard matrix unit,
and
$\mathfrak{sl}_n(\C)
 =
 \{A\in M_n(\C):\operatorname{tr}(A)=0\}.$

\subsection{Additive diameters of representations}

Let $G$ be a group and let $\rho:G\longrightarrow \mathrm{GL}(V)$
be a finite-dimensional linear representation.  If $U\leq V$ and $g\in G$,
we put $U^g=\rho(g)U.$
Thus $U^g$ is again a subspace of $V$ and $\dim U^g=\dim U.$

The subspace generated by the $G$-orbit of $U$ is $\langle U^G\rangle
 =
 \sum_{g\in G}U^g.$ This is the smallest $G$-invariant subspace of $V$ containing $U$.

The \emph{$G$-additive diameter of $V$ with respect to $U$} is
\[
 \operatorname{diam}^{G}_{+}(V,U)
 =
 \min\left\{
 m\geq1:
 V=U^{g_1}+\cdots+U^{g_m}
 \text{ for some }g_1,\ldots,g_m\in G
 \right\}.
\]
If no such integer exists, we put $\operatorname{diam}^{G}_{+}(V,U)=\infty.$

The diameter is finite if and only if $U$ generates $V$ as a $G$-module,
that is, $\langle U^G\rangle=V.$
Indeed, if the diameter is finite this is immediate.  Conversely, since
$V$ is finite-dimensional, if $V$ is generated by $\{U^g: g \in G\}$
then finitely many of the summands already span $V$.

If $U\neq0$ and the diameter is finite, then
\[
 \operatorname{diam}^{G}_{+}(V,U)
 \geq
 \left\lceil\frac{\dim V}{\dim U}\right\rceil.
\]
Indeed, a sum of $m$ translates of $U$ has dimension at most
$m\dim U$.  Following \cite{JezernikSpenko}, the diameter is called
\emph{optimal} if equality holds.

When $G$ acts on $M_n(\C)$ or $\mathfrak{sl}_n(\C)$ by conjugation, we use
the convention $A^g=g^{-1}Ag.$
Thus, for a subspace $U$,
\[
 U^g=g^{-1}Ug
 =
 \{g^{-1}ug:u\in U\}.
\]

For the conjugation action on $\mathfrak{sl}_n(\C)$, the actions of
$\mathrm{GL}_n(\C)$ and $\mathrm{SL}_n(\C)$ have the same orbits.
Indeed, if $g\in\mathrm{GL}_n(\C)$, choose $\lambda\in\C^\times$ such that $\det(\lambda g)=1.$
Then $\lambda g\in\mathrm{SL}_n(\C)$ and $(\lambda g)^{-1}A(\lambda g)=g^{-1}Ag.$

\subsection{Lie algebra diameters}

Let now $G$ be a complex algebraic group and put $\mathfrak g=\operatorname{Lie}(G).$
Let $\rho:G\longrightarrow\mathrm{GL}(V)$
be a finite-dimensional representation.  

Differentiation at the identity
gives a Lie algebra representation $d\rho:\mathfrak g\longrightarrow\mathfrak{gl}(V).$ If $x\in\mathfrak g$ and $\gamma(t)$ is a differentiable curve in $G$
such that $\gamma(0)=1$ and $\gamma'(0)=x,$
\hbox{then $d\rho(x)
 =
 \left.\frac{d}{dt}\right|_{t=0}\rho(\gamma(t)).$}

For $x\in\mathfrak g$ and $U\leq V$, we write $d\rho(x)U
 =
 \{d\rho(x)u:u\in U\}.$
Unlike a group translate $U^g$, the space $d\rho(x)U$ need not have the
same dimension as $U$, since $d\rho(x)$ need not be invertible.

The \emph{elementary Lie-additive diameter} of $V$ with respect to $U$ is
\[
 \operatorname{diam}^{\mathfrak g}_{+}(V,U)
 =
 \min\left\{
 d+1:
 V=
 U+d\rho(x_1)U+\cdots+d\rho(x_d)U,
 \quad
 x_1,\ldots,x_d\in\mathfrak g
 \right\}.
\]
If no such decomposition exists, we put $\operatorname{diam}^{\mathfrak g}_{+}(V,U)=\infty.$

We shall also use the monomial Lie-additive diameter appearing in
Question~6.7 of \cite{JezernikSpenko}.  Let $\operatorname{mon}(d\rho(\mathfrak g))$
be the set of all operators $d\rho(x_1)\cdots d\rho(x_r)$, $x_1,\ldots,x_r\in\mathfrak g,$
where $r\geq0$.  When $r=0$ the corresponding monomial is the identity
operator on $V$.

The \emph{monomial Lie-additive diameter} is
\[
 \operatorname{diam}^{\mathfrak g,\mathrm{mon}}_{+}(V,U)
 =
 \min\left\{
 m:
 V=m_1U+\cdots+m_mU,
 \quad
 m_i\in\operatorname{mon}(d\rho(\mathfrak g))
 \right\},
\]
again with value $\infty$ when no such decomposition exists.

The elementary and monomial diameters are different notions.  In the
elementary diameter one may apply only one element of $d\rho(\mathfrak g)$
to each copy of $U$, whereas the monomial diameter allows arbitrary finite
products of such operators \cite[Example~6.3]{JezernikSpenko}.

The passage from the group action to the Lie algebra action is obtained by means of the exponential map
$\exp_G:\mathfrak g\longrightarrow G$.

Let $\rho:G\longrightarrow\operatorname{GL}(V)$ be a finite-dimensional representation, and let $d\rho:\mathfrak g\longrightarrow\mathfrak{gl}(V)$ be the corresponding Lie algebra representation. For $x\in\mathfrak g$ and $t\in\C$ we have
\[
 \rho\bigl(\exp_G(tx)\bigr)
 =
 \exp\bigl(t\,d\rho(x)\bigr).
\]
Here the exponential on the right is the usual exponential of an endomorphism of $V$.

Put $X=d\rho(x)$. Then
\[
 \exp(tX)
 =
 I+tX+\frac{t^2X^2}{2!}+\frac{t^3X^3}{3!}+\cdots,
\]
and hence $\exp(tX)-I=tX+t^2R_X(t),$ where
$R_X(t)=\sum_{m\geq2}\frac{t^{m-2}X^m}{m!}$ is an analytic map from $\C$ to $\operatorname{End}(V)$.

It follows that, for $v\in V$,
$$\bigl(\rho(\exp_G(tx))-I\bigr)v
 =
 t\,d\rho(x)v+t^2z_{x,v}(t),$$
where $z_{x,v}(t)=R_{d\rho(x)}(t)v$ is an analytic map $\C\to V$.

\subsection{Upper right block closed subspaces}

We now recall the terminology used for the conjugation representation of
$\mathrm{SL}_n(\C)$ on $\mathfrak{sl}_n(\C)$.

Let $B$ be the standard Borel subgroup of $\mathrm{SL}_n(\C)$ consisting
of upper triangular matrices.  Its Lie algebra is the standard Borel
subalgebra
\[
 \mathfrak b
 =
 \{A\in\mathfrak{sl}_n(\C):
 A_{ij}=0\text{ whenever }i>j\}.
\]

For $1\leq i,j\leq n$, define
\[
 B_{ij}
 =
\langle E_{k\ell}:k\leq i,\ \ell\geq j \rangle_{\mathbb C}
 \cap\mathfrak{sl}_n(\C).
\]
Thus $B_{ij}$ consists of the traceless matrices supported in the upper
right block determined by the position $(i,j)$.

A subspace $U\leq\mathfrak{sl}_n(\C)$
is called \emph{upper right block closed} if $u\in U, u_{ij}\neq0,
 i\neq j$
implies $B_{ij}\leq U.$
In other words, if an off-diagonal position $(i,j)$ occurs with nonzero
coefficient in some element of $U$, then $U$ contains the whole upper
right block $B_{ij}$.

As recalled in \cite[Section~2.2.2]{JezernikSpenko}, every subspace of
$\mathfrak{sl}_n(\C)$ stable under the standard Borel subgroup is
upper right block closed.

\subsection{Reduction to Borel-stable subspaces}

Let $G$ be a complex linear algebraic group acting on a finite-dimensional
vector space $V$.  For $0<d<\dim V$, let $\operatorname{Gr}(V,d)$
denote the Grassmannian of $d$-dimensional subspaces of $V$.

For $k\geq1$, set
\[
 X_{d,k}
 =
 \left\{
 U\in\operatorname{Gr}(V,d):
 \operatorname{diam}^{G}_{+}(V,U)>k
 \right\}.
\]
The set $X_{d,k}$ is a closed $G$-invariant subvariety of
$\operatorname{Gr}(V,d)$; see
\cite[Section~2.1]{JezernikSpenko}.

If $X_{d,k}$ is nonempty, the Borel fixed point theorem implies that
$X_{d,k}$ contains a subspace fixed by a Borel subgroup of $G$.  We shall
use the following consequence.

\begin{proposition}\label{prop:borelreduction}
Let $G$ be a complex linear algebraic group acting on $V$, and let
$B\leq G$ be a Borel subgroup.  Fix positive integers $d$ and $k$.
Suppose that $\operatorname{diam}^{G}_{+}(V,U)\leq k$
for every $B$-stable subspace $U\in\operatorname{Gr}(V,d).$
Then $\operatorname{diam}^{G}_{+}(V,U)\leq k$
for every $U\in\operatorname{Gr}(V,d).$
\end{proposition}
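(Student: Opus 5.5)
The plan is to argue by contradiction using the closed subvariety $X_{d,k}$ introduced just before the statement. Suppose some $U_0\in\operatorname{Gr}(V,d)$ has $\operatorname{diam}^{G}_{+}(V,U_0)>k$. Then $X_{d,k}$ is nonempty. I will take as given, from \cite[Section~2.1]{JezernikSpenko}, that $X_{d,k}$ is closed in $\operatorname{Gr}(V,d)$ and $G$-invariant. The aim is to find a $B$-fixed point of $X_{d,k}$, since that contradicts the hypothesis.

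\textbf{Step 1 ($X_{d,k}$ is a nonempty projective $B$-variety).} The Grassmannian is a projective variety, so $X_{d,k}$, being closed in it, is projective as well. It is nonempty by assumption. It is $B$-stable because it is $G$-stable and $B\leq G$. Concretely, $G$-invariance holds because $\operatorname{diam}^{G}_{+}(V,U^g)=\operatorname{diam}^{G}_{+}(V,U)$: we have $U^{g g_i}=\rho(g_i)^{\ldots}$-type relabelling, and more precisely $(U^g)^{h}$ ranges over all translates of $U$ as $h$ ranges over $G$. The action of $B$ on $\operatorname{Gr}(V,d)$ is algebraic, since it is induced from the linear action on $V$, so $X_{d,k}$ is a $B$-variety.

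\textbf{Step 2 (Borel fixed point theorem).} $B$ is a connected solvable linear algebraic group, and $X_{d,k}$ is a nonempty complete $B$-variety. By the Borel fixed point theorem, $B$ has a fixed point $U_1\in X_{d,k}$. Being a fixed point means $\rho(b)U_1=U_1$ for all $b\in B$, so $U_1$ is a $B$-stable subspace of dimension $d$. Since $U_1\in X_{d,k}$, we have $\operatorname{diam}^{G}_{+}(V,U_1)>k$. This contradicts the hypothesis of the proposition, which gives the conclusion.

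\textbf{Main obstacle.} The only substantive input is the closedness of $X_{d,k}$, which is cited. If I had to verify it myself, I would write the complement as
$$\bigcup_{g_1,\ldots,g_k}\{U:\ \textstyle\sum_i U^{g_i}=V\}.$$
Each set in this union is open, because it is defined by nonvanishing of some maximal minor of the matrix built from Plücker-type coordinates of the $U^{g_i}$. A union of open sets is open, so the complement is open and $X_{d,k}$ is closed. I would also double-check the edge cases: the requirement $0<d<\dim V$ is only there so that the Grassmannian is nontrivial, and if $G$ is disconnected one uses that $B$ lies in $G^\circ$, which does not affect the argument.
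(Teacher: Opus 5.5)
Your proposal is correct and follows the same route as the paper, which cites \cite[Proposition~2.3]{JezernikSpenko}. In both, $X_{d,k}$ is a nonempty closed $G$-stable, hence $B$-stable, subvariety of the projective variety $\operatorname{Gr}(V,d)$, so the Borel fixed point theorem yields a $B$-stable $d$-dimensional subspace with diameter $>k$, contradicting the hypothesis; your own check of closedness, writing the complement as a union of the open rank conditions $\sum_i U^{g_i}=V$, is also sound.
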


This is \cite[Proposition~2.3]{JezernikSpenko}.  In the case of the
conjugation action of $\mathrm{SL}_n(\C)$ on $\mathfrak{sl}_n(\C)$,
it reduces questions about arbitrary subspaces to upper right block
closed subspaces.

\subsection{Auxiliary diameter bounds}

We finally record the two estimates from \cite{JezernikSpenko} which will
be used later.

Put $m=\left\lfloor\frac{n+1}{2}\right\rfloor$
and define
\[
 \mathcal B=
 \begin{cases}
 B_{m,m},
   &\text{if }n\text{ is odd},\\[2mm]
 B_{m,m}+B_{m+1,m+1},
   &\text{if }n\text{ is even}.
 \end{cases}
\]
By \cite[Lemma~5.1]{JezernikSpenko}, $\operatorname{diam}^{\mathrm{SL}_n(\C)}_{+}
 \bigl(\mathfrak{sl}_n(\C),\mathcal B\bigr)
 \leq8.$

Moreover, by \cite[Proposition~5.2]{JezernikSpenko}, if $U\leq\mathfrak{sl}_n(\C)$ is upper right block closed and $\dim U>\frac{3n^2}{4}+\frac n2,$
then $ \operatorname{diam}^{\mathrm{SL}_n(\C)}_{+}
 \bigl(\mathfrak{sl}_n(\C),U\bigr)
 \leq8.$

\section{Large subspaces of $\mathfrak{sl}_n(\C)$}

The following elementary averaging observation will be useful.

\begin{lemma}\label{lem:averaging}
Let $\Omega=\{(i,j):1\leq i,j\leq n,\ i\neq j\},$ $M=|\Omega|=n(n-1).$
If $R,S\subseteq\Omega$, then some $\sigma\in S_n$ satisfies
\[
 |R\cap \sigma S|\geq \frac{|R||S|}{M},
\]
where
$\sigma S=\{(\sigma(i),\sigma(j)):(i,j)\in S\}$.
\end{lemma}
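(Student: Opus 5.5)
The plan is a first-moment (double counting) argument over the whole symmetric group. Sum $|R\cap\sigma S|$ over all $\sigma\in S_n$. The average value is then exactly $|R||S|/M$, so some $\sigma$ attains at least the average.

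First, I would write
\[
 \sum_{\sigma\in S_n}|R\cap\sigma S|
 =\sum_{(a,b)\in R}\ \sum_{(i,j)\in S}
 \#\{\sigma\in S_n:\sigma(i)=a,\ \sigma(j)=b\}.
\]
This holds because $(a,b)\in\sigma S$ if and only if $(a,b)=(\sigma(i),\sigma(j))$ for some $(i,j)\in S$. Since $\sigma$ is a bijection, that pair $(i,j)$ is unique for given $\sigma$, so interchanging the sums does not overcount.

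The key step is transitivity. Since $i\neq j$ and $a\neq b$, the number of permutations with $\sigma(i)=a$ and $\sigma(j)=b$ is exactly $(n-2)!$. This is because $S_n$ acts transitively on ordered pairs of distinct elements, and the stabilizer of such a pair has order $(n-2)!$. Hence the total sum is $|R||S|(n-2)!$.

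Dividing by $|S_n|=n!=n(n-1)(n-2)!=M(n-2)!$ gives an average of exactly $|R||S|/M$. Since a maximum is at least the average, some $\sigma$ satisfies $|R\cap\sigma S|\geq |R||S|/M$.

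No step is a real obstacle. The only point needing care is the uniqueness of $(i,j)$ in the interchange of sums, which follows from injectivity of $\sigma$. The degenerate cases $n=1$ and $R=\emptyset$ are trivial; for $n=1$ the set $\Omega$ is empty, so $M=0$ and the statement is vacuous.
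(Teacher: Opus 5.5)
Your proposal is correct and matches the paper's proof: both double count the pairs $(\sigma,x)$ with $x\in R\cap\sigma S$, use that exactly $(n-2)!$ permutations send a fixed ordered pair of distinct indices to another, and divide by $n!=M(n-2)!$ to conclude that some $\sigma$ attains at least the average $|R||S|/M$. Your remark that $(i,j)$ is unique because $\sigma$ is injective spells out a step the paper leaves implicit in its two-way count.
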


\begin{proof}
Fix $(i,j)\in S$ and $(a,b)\in R$.  There are exactly $(n-2)!$
permutations $\sigma\in S_n$ for which
$\sigma(i)=a$ and $\sigma(j)=b$.  Counting the pairs $(\sigma,x)$, $\sigma\in \operatorname{Sym}(n)$, $x\in R\cap \sigma S$, in the two possible ways gives
\[
 \sum_{\sigma\in S_n}|R\cap \sigma S|
 =|R||S|(n-2)!.
\]
Since $n!=M(n-2)!$, the average of the numbers
$|R\cap\sigma S|$ is $|R||S|/M$.  One of them is therefore at least the
average.
\end{proof}

Next result is the correction of Proposition 5.3 of \cite{JezernikSpenko}.

\begin{proposition}\label{prop:corrected53}
Let $0<\varepsilon<1/3$ and $n>9/\varepsilon^2$.  Suppose that
$U\leq\mathfrak{sl}_n(\C)$ is upper right block closed and
$\dim U>\varepsilon n^2$.  Then
\[
 \operatorname{diam}^{\mathrm{SL}_n(\C)}_{+}
 (\mathfrak{sl}_n(\C),U)
 \leq \frac{32}{\varepsilon}+8.
\]
\end{proposition}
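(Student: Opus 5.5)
Let $S=\{(i,j)\in\Omega: E_{ij}\in U\}$. The plan is to find $k\le\lceil 4/\varepsilon\rceil$ permutations $\sigma_1,\dots,\sigma_k$ such that the sum of the corresponding conjugates of $U$ is upper right block closed and has dimension greater than $3n^2/4+n/2$. Proposition~5.2 of \cite{JezernikSpenko} then gives diameter at most $8$ for that sum, hence at most $8k\le 8\lceil 4/\varepsilon\rceil\le 32/\varepsilon+8$ for $U$.

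\textbf{Step 1: a lower bound on $|S|$.} Since $U$ is upper right block closed, every element of $U$ is supported on $S\cup\{\text{diagonal}\}$: if $u_{ij}\neq0$ with $i\ne j$, then $B_{ij}\le U$, so $E_{ij}\in U$. Hence $\dim U\le |S|+(n-1)$, and therefore $|S|>\varepsilon n^2-n$. This also shows $S\neq\emptyset$, so $U$ contains $E_{ij}$ for some $i\neq j$ and hence contains $B_{ij}$.

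\textbf{Step 2: greedy covering via Lemma~\ref{lem:averaging}.} Let $P_\sigma$ be the permutation matrix with $P_\sigma^{-1}E_{ij}P_\sigma=E_{\sigma(i)\sigma(j)}$, up to the convention on $\sigma$. Then $U^{P_\sigma}$ contains $E_{ab}$ for every $(a,b)\in\sigma S$. Put $T_0=\emptyset$. Given $T_{t}=\bigcup_{s\le t}\sigma_sS$, apply Lemma~\ref{lem:averaging} with $R=\Omega\setminus T_t$ to choose $\sigma_{t+1}$ with $|R\cap\sigma_{t+1}S|\ge |R||S|/M$. Writing $\delta=|S|/M$, this gives
\[
|\Omega\setminus T_{t+1}|\le(1-\delta)|\Omega\setminus T_t|,
\qquad\text{so}\qquad
|\Omega\setminus T_k|\le (1-\delta)^kM\le e^{-\delta k}M .
\]
By Step~1, $\delta>(\varepsilon n^2-n)/n^2=\varepsilon-1/n$. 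Since $n>9/\varepsilon^2$, we have $1/n<\varepsilon^2/9<\varepsilon/27$, so $\delta\ge \tfrac{26}{27}\varepsilon$. Taking $k=\lceil 4/\varepsilon\rceil$ gives $e^{-\delta k}\le e^{-104/27}<1/40$. Hence the uncovered part of $\Omega$ has size less than $n^2/40$.

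\textbf{Step 3: passing to Proposition~5.2.} Let $W=\sum_t U^{P_{\sigma_t}}$. Then $W$ contains all $E_{ab}$ with $(a,b)\in T_k$, so $\dim W\ge |T_k|\ge n(n-1)-n^2/40$. This exceeds $3n^2/4+n/2$ whenever $n$ is larger than a small absolute constant; here $n>81$ suffices easily.

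\textbf{Main obstacle.} Proposition~5.2 requires $W$ itself to be upper right block closed, and a sum of permuted conjugates need not be. I would handle this with the Borel reduction (Proposition~\ref{prop:borelreduction}) rather than applying Proposition~5.2 to $W$ directly. Let $\mathcal D=\dim W$ and consider $X_{\mathcal D,8}$. For an arbitrary $W'$ of this dimension, Proposition~\ref{prop:borelreduction} reduces the bound to $B$-stable subspaces of dimension $\mathcal D$. These are upper right block closed, and since $\mathcal D>3n^2/4+n/2$, Proposition~5.2 gives diameter at most $8$. Hence every subspace of dimension $\mathcal D$, in particular $W$, has $\operatorname{diam}_+\le 8$.

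Combining, $\mathfrak{sl}_n(\C)$ is a sum of at most $8k$ conjugates of $U$. Specifically, each of the at most $8$ conjugates $W^g$ is a sum of $k$ conjugates $U^{P_{\sigma_t}g}$, all of these being conjugates by $\mathrm{GL}_n$ elements, which have the same orbits as $\mathrm{SL}_n$. The remaining point to verify carefully is the dimension arithmetic in Step~3, including the trace constraint on diagonal entries; the $n/2$ slack absorbs it.
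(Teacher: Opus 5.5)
Your proposal is correct and follows essentially the same route as the paper: greedily cover the off-diagonal positions with $\lceil 4/\varepsilon\rceil$ permuted copies of $S$ using the averaging lemma, then use the Borel reduction together with \cite[Proposition~5.2]{JezernikSpenko} at dimension $\dim W$ (which handles the fact that $W$ need not be upper right block closed), and conclude with the bound $8\lceil 4/\varepsilon\rceil$. The differences are cosmetic. You bound $(1-\delta)^k$ by $e^{-\delta k}<1/40$, whereas the paper uses $(1+\alpha)^r\geq 1+r\alpha$ to get $|R_r|<\frac29 M$. You also sum conjugates of $U$ itself rather than of its off-diagonal part $X$, which only makes $\dim W$ larger; in the trivial case $W=\mathfrak{sl}_n(\C)$ you are done directly.
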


\begin{proof}
Put $\Omega=\{(i,j):1\leq i,j\leq n,\ i\neq j\}$, $M=n(n-1)$, and let $S=\{(i,j)\in\Omega:E_{ij}\in U\}.$
Let $D$ denote the traceless diagonal subspace of
$\mathfrak{sl}_n(\C)$.  Since $U$ is upper right block closed, every nonzero
off-diagonal entry occurring in an element of $U$ forces the corresponding
matrix unit to belong to $U$.  It follows that $U=\langle E_{ij}:(i,j)\in S\rangle_{\C}\oplus(U\cap D).$
Consequently $|S|\geq \dim U-(n-1)>\varepsilon n^2-(n-1).$

Now, set $\alpha=|S|/M$.  Then
\[
 \alpha>
 \frac{\varepsilon n^2-(n-1)}{n(n-1)}
 >\varepsilon-\frac1n.
\]
The assumptions on $n$ and $\varepsilon$ give $\frac1n<\frac{\varepsilon^2}{9}<\frac{\varepsilon}{27},$
and hence $\alpha>\frac{26}{27}\varepsilon.$

We now cover most of $\Omega$ by permuted copies of $S$.  Put
$R_0=\Omega$.  Having defined $R_j$, choose $\sigma_{j+1}\in S_n$ so that
$|R_j\cap\sigma_{j+1}S|$ is maximal, and set $R_{j+1}=R_j\setminus\sigma_{j+1}S.$
Lemma~\ref{lem:averaging} yields $|R_j\cap\sigma_{j+1}S|\geq \alpha |R_j|,$
so $|R_{j+1}|\leq(1-\alpha)|R_j|.$

Induction gives $|R_j|\leq M(1-\alpha)^j.$

Take $r=\Bigl\lceil\frac4\varepsilon\Bigr\rceil.$
Since $(1-\alpha)^{-1}>1+\alpha,$ so
\[
 (1-\alpha)^{-r}>(1+\alpha)^r\geq1+r\alpha
 >\frac92.
\]
Thus $|R_r|<\frac29M.$

Let $X=\langle E_{ij}:(i,j)\in S\rangle_{\C}\leq U.$
Every permutation of the standard basis is induced, under conjugation, by
an element of $\mathrm{SL}_n(\C)$: a permutation matrix may be multiplied by
a nonzero scalar without altering its conjugation action.  We may therefore
choose $g_1,\ldots,g_r\in\mathrm{SL}_n(\C)$ such that $X^{g_j}
 =\langle E_{ab}:(a,b)\in\sigma_jS\rangle_{\C}$ for $j \in \{1, \ldots, r\}$.
 
Put $W=X^{g_1}+\cdots+X^{g_r}.$
By the construction of the sets $R_j$, it follows that $W=\langle E_{ij}:(i,j)\in\Omega\setminus R_r\rangle_{\C}.$
Hence, $\dim W=M-|R_r|>\frac79M=\frac79n(n-1).$
Now $n>9/\varepsilon^2>81$, and in particular $n>46$.  Therefore
\[
 \frac79n(n-1)>\frac{3n^2}{4}+\frac n2.
\]
Write $d=\dim W$.

Every Borel-stable subspace of dimension $d$ of
$\mathfrak{sl}_n(\C)$ is upper right block closed.  Since
$d>3n^2/4+n/2$, the argument of
\cite[Lemma~5.1 and Proposition~5.2]{JezernikSpenko} gives
$\mathrm{SL}_n(\C)$-additive diameter at most $8$ for every such
Borel-stable subspace.  The Borel reduction
\cite[Proposition~2.3]{JezernikSpenko} therefore gives $\operatorname{diam}^{\mathrm{SL}_n(\C)}_{+}
 (\mathfrak{sl}_n(\C),W)\leq8.$
Thus there are $h_1,\ldots,h_8\in\mathrm{SL}_n(\C)$ such that $\mathfrak{sl}_n(\C)=W^{h_1}+\cdots+W^{h_8}.$

Since $W\leq U^{g_1}+\cdots+U^{g_r}$, it follows that
\[
 \mathfrak{sl}_n(\C)
=
 \sum_{a=1}^{8}\sum_{b=1}^{r}U^{g_bh_a}.
\]
Consequently, $\operatorname{diam}^{\mathrm{SL}_n(\C)}_{+}
 (\mathfrak{sl}_n(\C),U)
 \leq 8r
 =8\Bigl\lceil\frac4\varepsilon\Bigr\rceil
 \leq\frac{32}{\varepsilon}+8.$
\end{proof}

The Borel reduction immediately removes the stability assumption.

\begin{corollary}\label{cor:allsubspaces}
Let $0<\varepsilon<1/3$ and $n>9/\varepsilon^2$.  If
$U\leq\mathfrak{sl}_n(\C)$ and $\dim U>\varepsilon n^2$, then
\[
 \operatorname{diam}^{\mathrm{SL}_n(\C)}_{+}
 (\mathfrak{sl}_n(\C),U)
 \leq\frac{32}{\varepsilon}+8.
\]
\end{corollary}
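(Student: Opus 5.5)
The plan is to combine Proposition~\ref{prop:corrected53} with the Borel reduction, Proposition~\ref{prop:borelreduction}. The one issue is that the reduction is stated for subspaces of a fixed dimension, whereas the hypothesis here is only a lower bound $\dim U>\varepsilon n^2$. I will therefore fix the dimension first.

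Let $U\leq\mathfrak{sl}_n(\C)$ with $\dim U>\varepsilon n^2$, and put $d=\dim U$. If $d=\dim\mathfrak{sl}_n(\C)=n^2-1$, then $U=\mathfrak{sl}_n(\C)$. In that case the diameter is $1$, which is certainly at most $32/\varepsilon+8$.

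Otherwise $0<d<\dim \mathfrak{sl}_n(\C)$, so $\operatorname{Gr}(\mathfrak{sl}_n(\C),d)$ makes sense. Take $G=\mathrm{SL}_n(\C)$ acting by conjugation and let $B$ be the standard upper triangular Borel subgroup. Set $k=8\lceil 4/\varepsilon\rceil$; this is the integer bound actually produced in the proof of Proposition~\ref{prop:corrected53}, and it satisfies $k\leq 32/\varepsilon+8$.

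Now take any $B$-stable $U'\in\operatorname{Gr}(\mathfrak{sl}_n(\C),d)$. As recalled in Section~2.3, $U'$ is upper right block closed. It also satisfies $\dim U'=d>\varepsilon n^2$, and $\varepsilon,n$ meet the hypotheses of Proposition~\ref{prop:corrected53}. That proposition (more precisely its proof) therefore gives $\operatorname{diam}^{G}_{+}(\mathfrak{sl}_n(\C),U')\leq k$.

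So the hypothesis of Proposition~\ref{prop:borelreduction} holds for this $d$ and $k$. It follows that every $d$-dimensional subspace, and in particular $U$, has diameter at most $k\leq 32/\varepsilon+8$.

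The only real point to check is that the Borel reduction needs an integer $k$, while the bound in Proposition~\ref{prop:corrected53} is stated as a real number. This is handled either by using $k=8\lceil4/\varepsilon\rceil$ as above, or by taking $k=\lfloor 32/\varepsilon+8\rfloor$, which works equally well because diameters are integers.

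Since the dimension is fixed as $\dim U$ itself, no monotonicity in $\dim U$ is needed. Everything else is a direct citation.
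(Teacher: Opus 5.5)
Your proposal is correct and takes the same route as the paper. You apply the Borel reduction (Proposition~\ref{prop:borelreduction}) at the fixed dimension $d=\dim U$, note that Borel-stable subspaces are upper right block closed, and invoke Proposition~\ref{prop:corrected53}; your handling of the trivial case $d=n^2-1$ and of the integer $k=8\lceil 4/\varepsilon\rceil$ makes explicit details the paper leaves implicit.
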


\begin{proof}
By \cite[Proposition~2.3]{JezernikSpenko}, it is enough to prove the
bound for Borel-stable subspaces of the same dimension.  Such subspaces are
upper right block closed, so Proposition~\ref{prop:corrected53} applies.
\end{proof}

\section{Group and Lie algebra diameters}

We next compare the group-additive diameter with the elementary Lie-additive
diameter.  The argument is local and does not require irreducibility.

\begin{theorem}\label{thm:rightineq}
Let $G$ be a complex algebraic group, let
$\rho:G\to\operatorname{GL}(V)$ be a finite-dimensional representation,
and let $\mathfrak g=\operatorname{Lie}(G)$. Then, for every subspace
$U\leq V$,
\[
 \operatorname{diam}^{G}_{+}(V,U)
 \leq
 \operatorname{diam}^{\mathfrak g}_{+}(V,U).
\]
\end{theorem}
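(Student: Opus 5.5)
We may assume $\operatorname{diam}^{\mathfrak g}_{+}(V,U)=d+1<\infty$; otherwise there is nothing to prove. Fix $x_1,\ldots,x_d\in\mathfrak g$ with $V=U+d\rho(x_1)U+\cdots+d\rho(x_d)U$, and put $X_i=d\rho(x_i)$. The plan is to show that for small nonzero $t$,
\[
 V=U+U^{\exp_G(tx_1)}+\cdots+U^{\exp_G(tx_d)},
\]
which gives $d+1$ group translates, the first with $g_0=1$.

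First, choose a basis $u_1,\ldots,u_k$ of $U$. The $(d+1)k$ vectors
\[
 u_1,\ldots,u_k,\quad X_1u_1,\ldots,X_1u_k,\quad\ldots,\quad X_du_1,\ldots,X_du_k
\]
span $V$. Pick from them $N=\dim V$ vectors forming a basis of $V$. Write these as $u_a$ for $a\in A$ and $X_iu_b$ for $(i,b)\in P$.

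Next, for $t\in\C$, consider the vectors $u_a$ ($a\in A$) together with
\[
 \tfrac1t\bigl(\rho(\exp_G(tx_i))-I\bigr)u_b=X_iu_b+t\,z_{x_i,u_b}(t),\qquad (i,b)\in P.
\]
This identity comes from the expansion $\rho(\exp_G(tx))=\exp(t\,d\rho(x))$ recalled in Section~2. For $t\neq0$, each such vector lies in $U+U^{\exp_G(tx_i)}$, since $u_b\in U$ and $\rho(\exp_G(tx_i))u_b\in U^{\exp_G(tx_i)}$. Let $f(t)$ be the determinant of these $N$ vectors in a fixed basis of $V$. The maps $z_{x,v}$ are analytic, so $f$ is analytic in $t$. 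At $t=0$ the vectors are exactly the chosen basis, so $f(0)\neq0$. By continuity, $f(t)\neq0$ for all sufficiently small $t$, and we choose such a $t\neq0$. Then these $N$ vectors span $V$ and lie in $U+\sum_i U^{\exp_G(tx_i)}$. This gives the displayed equality with $g_i=\exp_G(tx_i)\in G$, hence $\operatorname{diam}^G_+(V,U)\leq d+1$.

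The main point requiring care is the justification of the exponential identity and the analyticity near $0$, for a complex algebraic group that need not be connected. This costs nothing here: $\exp_G(tx)$ lies in the identity component, and $t\mapsto\rho(\exp_G(tx))$ is a one-parameter subgroup with derivative $d\rho(x)$ at $0$, hence equal to $\exp(t\,d\rho(x))$. This is exactly the formula recorded in Section~2. No irreducibility or connectedness enters anywhere. The only other point to check is that dividing by $t$ does not affect membership in the subspace, which is clear since subspaces are closed under scalars.
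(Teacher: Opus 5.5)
Your proposal is correct and is essentially the paper's proof. Both arguments extract from $U+X_1U+\cdots+X_dU$ a basis of $V$ of the form $u_a$, $X_iu_b$, replace each $X_iu_b$ by a vector built from $(\exp(tX_i)-I)u_b\in U+\rho(\exp_G(tx_i))U$, and use analyticity of the resulting determinant near $t=0$ to find some $t\neq0$ at which these vectors are still a basis. The only difference is cosmetic: you divide by $t$ and use continuity of a determinant with $f(0)\neq0$, whereas the paper factors the determinant as $t^s\det(\ldots)+t^{s+1}h(t)$.
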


\begin{proof}
Assume that $\operatorname{diam}^{\mathfrak g}_{+}(V,U)=d+1<\infty.$
Choose $x_1,\ldots,x_d\in\mathfrak g$, and put $X_i=d\rho(x_i),$
so that $V=U+X_1U+\cdots+X_dU.$

We may therefore choose elements $u_1,\ldots,u_r\in U$
and $v_1,\ldots,v_s\in U$, where $i_1,\ldots,i_s\in\{1,\ldots,d\},$
such that $u_1,\ldots,u_r,
 X_{i_1}v_1,\ldots,X_{i_s}v_s$
is a basis of $V$. For $t\in\C$ put $w_j(t)
 =
 \bigl(\operatorname{exp}(tX_{i_j})-I\bigr)v_j.$
 
Since
\[
 \operatorname{exp}(tX_{i_j})-I
 =
 tX_{i_j}+t^2\frac{X_{i_j}^2}{2!}
 +t^3\frac{X_{i_j}^3}{3!}+\cdots,
\]
we have $w_j(t)
 =
 tX_{i_j}v_j+t^2z_j(t)$
for some analytic map $z_j:\mathbb C\to V$.

Fixing a basis of $V$ and taking determinants, multilinearity gives
\[
\begin{aligned}
 &\det\bigl(
 u_1,\ldots,u_r,
 w_1(t),\ldots,w_s(t)
 \bigr)
\\
 &\qquad =
 t^s
 \det\bigl(
 u_1,\ldots,u_r,
 X_{i_1}v_1,\ldots,X_{i_s}v_s
 \bigr)
 +t^{s+1}h(t)
\end{aligned}
\]
for some analytic function $h(t)$.

The first determinant is nonzero. Hence, for some $t\neq0$, $u_1,\ldots,u_r,
 w_1(t),\ldots,w_s(t)$
is again a basis of $V$.

But $u_j\in U$ and $w_j(t)
 =
 \operatorname{exp}(tX_{i_j})v_j-v_j
 \in
 U+\operatorname{exp}(tX_{i_j})U,$ and so $V
 =
 U+\operatorname{exp}(tX_1)U+\cdots+
 \operatorname{exp}(tX_d)U.$

Now set $g_i=\operatorname{exp}(tx_i)\in G.$
Since $\rho(g_i)=\operatorname{exp}(tX_i),$ we obtain $V
 =
 U+\rho(g_1)U+\cdots+\rho(g_d)U.$
Thus $\operatorname{diam}^{G}_{+}(V,U)
 \leq d+1
 =
 \operatorname{diam}^{\mathfrak g}_{+}(V,U).$
\end{proof}
\begin{remark}
Theorem~\ref{thm:rightineq} gives an affirmative answer to the right-hand
inequality in \cite[Question~6.7]{JezernikSpenko}, and in fact proves more
than was asked there: the representation need not be irreducible.
\end{remark}

\section*{Acknowledgement}
The author would like to thank Urban Jezernik for helpful correspondence and for several useful comments and suggestions concerning this note.

The author is member of the non-profit association ``AGTA -- Advances in
Group Theory and Applications'' (www.advgrouptheory.com) and is supported by GNSAGA (INdAM).

\end{document}